\newtheorem{thm}{Theorem}[section]
\newtheorem{lem}[thm]{Lemma}
\theoremstyle{definition}
\theoremstyle{remark}
\newtheorem{rem}{Remark}
\begin{document}

\title{Degrees of maps between $S^3$-bundles over $S^5$}
\author{Xueqi Wang}
\affil{Institute of Mathematics, Chinese Academy of Sciences, Bejing, 100190, China \authorcr Email: wangxueqi@amss.ac.cn}
\date{}
\maketitle

\begin{abstract}
In this article, we compute all possible degrees of maps between $S^3$-bundles over $S^5$. It also provides a correction of an article by Lafont and Neofytidis \cite{LN17}.

\noindent\textbf{Keywords: }Mapping degree, sphere bundle, homotopy set

\noindent\textbf{2010 MSC: }55M25; 57R19
\end{abstract}

\section{The main result}

For two $n$-dimensional closed manifold $M$ and $N$, let $D(M,N)$ be the set of degrees of maps from $M$ to $N$, i.e.
\[
D(M,N)=\{\deg f| f:M\to N\}.
\]
Set $D(M)=D(M,M)$ for short.

The set of mapping degrees has been studied by many authors, for both lower dimensional and higher dimensional manifolds, cf. \cite{DW03,GVC17,SWW10} etc. Unlike the complicated situation for lower dimensional manifolds, homotopy theory seems to be a powerful tool in the higher dimensional case.

In this article, the main objects concerned are $S^3$-bundles over $S^5$. Recall that isomorphism classes of $4$-dimensional vector bundles over $S^5$ are one-to-one correspondence with elements in $\pi_4(SO(4))$. Using the canonical diffeomorphism $SO(4)\cong S^3\times SO(3)$, we have $\pi_4(SO(4))\cong \pi_4(S^3)\oplus\pi_4(SO(3))\cong \mathbb{Z}_2\oplus\mathbb{Z}_2$. Under this correspondence, denote the total spaces of those associated sphere bundles as $M_{i,j}$ ($i=0,1$). Obviously, $M_{0,0}\cong S^3\times S^5$, $M_{1,0}\cong SU(3)$. Moreover, we have:

\begin{lem}[\cite{JW54,JW55}]
  \begin{enumerate}
    \item $S^3\times S^5$, $M_{0,1}$ and $SU(3)$ are not homotopy equivalent to each other.
    \item $M_{1,1}\simeq SU(3)$.
  \end{enumerate}
\end{lem}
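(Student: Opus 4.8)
The plan is to give each $M_{i,j}$ the CW structure that comes from the bundle, identify its $5$-skeleton, and then analyse the attaching map of the top $8$-cell. Writing $S^5=e^0\cup e^5$ and trivializing the bundle over the $5$-cell $\cong D^5$, the space $M_{i,j}$ is built from the fibre $S^3=e^0\cup e^3$ over $e^0$ by attaching $D^5\times S^3$ along $S^4\times S^3\to S^3$, $(x,s)\mapsto\alpha(x)\cdot s$, where $\alpha\colon S^4\to SO(4)$ is the clutching function; this contributes one $5$-cell $D^5\times\{*\}$ and one $8$-cell $D^5\times e^3$, and the attaching map of the $5$-cell is $\alpha$ followed by evaluation $SO(4)\to S^3$, i.e.\ the image of $[\alpha]$ under $\pi_4(SO(4))=\pi_4(S^3)\oplus\pi_4(SO(3))\to\pi_4(S^3)$ --- precisely the index $i$. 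Hence, with $\eta$ a generator of $\pi_4(S^3)$,
\[
M_{i,j}\simeq X_i\cup_{\phi_{i,j}}e^8,\qquad X_0=S^3\vee S^5,\quad X_1=\Sigma\mathbb{CP}^2=S^3\cup_\eta e^5,\quad \phi_{i,j}\in\pi_7(X_i).
\]

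For part (1), the homotopy invariant $Sq^2\colon H^3(-;\mathbb{Z}_2)\to H^5(-;\mathbb{Z}_2)$ is detected on the $5$-skeleton: it vanishes for the $M_{0,j}$ (it does on $S^3\vee S^5$) and is an isomorphism for the $M_{1,j}$ (it is on $\Sigma\mathbb{CP}^2=\Sigma(S^2\cup_\eta e^4)$, equivalently $Sq^2x_3=x_5$ in $SU(3)$), which separates $\{S^3\times S^5,\ M_{0,1}\}$ from $\{SU(3),\ M_{1,1}\}$ and so gives $S^3\times S^5\not\simeq SU(3)$ and $M_{0,1}\not\simeq SU(3)$. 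For the remaining inequivalence $S^3\times S^5\not\simeq M_{0,1}$ --- where all Steenrod squares vanish for dimension reasons and both cohomology rings are $\Lambda(x_3,x_5)$ --- I would compare the attaching maps inside $\pi_7(S^3\vee S^5)\cong\pi_7(S^3)\oplus\pi_7(S^5)\oplus\mathbb{Z}\langle[\iota_3,\iota_5]\rangle\cong\mathbb{Z}_2\oplus\mathbb{Z}_2\oplus\mathbb{Z}$ (Hilton): Poincar\'e duality on the closed oriented $8$-manifold forces the $[\iota_3,\iota_5]$-coefficient of $\phi_{0,j}$ to be $\pm1$; collapsing the fibre gives $M_{0,j}/(\text{fibre})\simeq\Sigma^5(S^3_+)\simeq S^5\vee S^8$ for every bundle (the clutching data is destroyed), so the $\pi_7(S^5)$-component is zero; and collapsing a section coming from the bundle structure identifies $M_{0,j}/(\text{section})$ with the Thom space $S^3\cup_{J(\beta_j)}e^8$ of the underlying rank-$3$ bundle, where $\beta_j\in\pi_4(SO(3))$ is the $SO(3)$-part of the clutching function and $J\colon\pi_4(SO(3))\to\pi_7(S^3)$ is the $J$-homomorphism. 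Thus $\phi_{0,0}=\pm[\iota_3,\iota_5]$ while $\phi_{0,1}=\pm[\iota_3,\iota_5]+(i_3)_*J(\beta)$ for $\beta$ a generator; and since $[\iota_3,\iota_3]=0$ ($S^3$ is an $H$-space), a direct inspection shows the order-$8$ group of self-homotopy-equivalences of $S^3\vee S^5$ acts on $\pi_7$ only by possibly negating the $[\iota_3,\iota_5]$-coefficient and fixes the split summand $\pi_7(S^3)$. Hence $S^3\times S^5\not\simeq M_{0,1}$ as soon as $J(\beta)\ne0$.

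For part (2), both $M_{1,0}=SU(3)$ and $M_{1,1}$ are $\Sigma\mathbb{CP}^2\cup_\psi e^8$; the two bundles agree over a hemisphere and over the $5$-skeleton and differ by re-clutching with the same generator $\beta$ of $\pi_4(SO(3))\subset\pi_4(SO(4))$ (which fixes a point of the fibre), so $\psi_1$ and $\psi_0$ differ by $(j_3)_*J(\beta)$, with $j_3\colon S^3\hookrightarrow\Sigma\mathbb{CP}^2$ the $3$-skeleton inclusion. Since $\eta$ is the attaching map of the $5$-cell, $j_3\circ\eta\simeq*$, so every element of $\pi_7(S^3)$ lying in $\eta_3\circ\pi_7(S^4)$ is killed by $(j_3)_*$; as the nonzero element $J(\beta)$ of $\pi_7(S^3)$ is $\eta$-divisible in this sense --- verified by computing $\pi_7(\Sigma\mathbb{CP}^2)$ from the cofibration $S^3\to\Sigma\mathbb{CP}^2\to S^5$, or by factoring the generator of $\pi_7(S^3)$ through $\eta$ --- we get $(j_3)_*J(\beta)=0$, so $\psi_1=\psi_0$ up to a self-equivalence of $\Sigma\mathbb{CP}^2$ and $M_{1,1}\simeq SU(3)$. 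Conceptually: the extra twist is always an $\eta$-multiple on the top cell, and it survives exactly when the $5$-skeleton retracts onto $S^3$, i.e.\ when $i=0$.

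The cell bookkeeping and the self-equivalence computations are routine; the real content, and the step I expect to be the main obstacle, is to prove that $J\colon\pi_4(SO(3))\to\pi_7(S^3)\cong\mathbb{Z}_2$ carries the generator to the generator (while its image nevertheless dies in $\pi_7(\Sigma\mathbb{CP}^2)$). Here one writes the generator of $\pi_4(SO(3))$ via $Spin(3)=S^3$, so that it corresponds to conjugation $q\mapsto L_q\circ R_{q^{-1}}$ and hence $\iota_*\beta=L_*\eta+R'_*\eta$ in $\pi_4(SO(4))$, and computes $J$ by additivity from the Hopf constructions of left and right quaternion multiplication composed with $\eta$; the delicate point is that these two contributions do \emph{not} simply cancel, because of the correction term $H(\alpha)\,[\iota_{2k},\iota_{2k}]$ in the identity $(-1_{S^{2k}})\circ\alpha=-\alpha+H(\alpha)\,[\iota_{2k},\iota_{2k}]$ --- this is presumably the point at which \cite{LN17} requires correction. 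Alternatively, the whole lemma is a special case of the homotopy classification of total spaces of sphere bundles over spheres due to James and Whitehead \cite{JW54,JW55}, and may be quoted directly.
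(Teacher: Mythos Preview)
The paper does not prove this lemma at all; it is quoted directly from James--Whitehead \cite{JW54,JW55}. Your outline is essentially a reconstruction of their method---identify the $5$-skeleton via the evaluation $\pi_4(SO(4))\to\pi_4(S^3)$, then compare top-cell attaching maps modulo self-equivalences of the skeleton---and the strategy is sound. The cell structures you obtain agree with those the paper later records in Lemma~2.2.

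Two small corrections. First, your $\eta$-divisibility argument in part~(2) is unnecessary: the paper states (Lemma~2.2(3)) that $\pi_7(\Sigma\mathbb{CP}^2)\cong\mathbb{Z}$, so $(j_3)_*\colon\pi_7(S^3)\cong\mathbb{Z}_2\to\pi_7(\Sigma\mathbb{CP}^2)$ vanishes for purely algebraic reasons, and $\psi_1=\psi_0$ follows immediately once you know their difference lies in the image of $(j_3)_*$. Second, your aside that the $J$-homomorphism computation ``is presumably the point at which \cite{LN17} requires correction'' is mistaken: the error flagged in this paper's Remark~1 is that an explicit matrix map $g$ claimed to take values in $SU(3)$ does not; it has nothing to do with $J$.

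Your identification of the crux---nontriviality of $J\colon\pi_4(SO(3))\to\pi_7(S^3)$---is correct, and the Thom-space argument pinning down the $\pi_7(S^3)$-component of $\phi_{0,j}$ is valid because for $i=0$ the clutching lies in $SO(3)\subset SO(4)$ and fixes a fibre point, giving the needed section. For $i=1$ there is no such section, so the analogous formula $\psi_1-\psi_0=(j_3)_*J(\beta)$ requires a separate argument; this is precisely where the James--Whitehead machinery does the real work, and is why the paper simply cites them.
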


Therefore, we only need to consider three objects: $S^3\times S^5$, $M_{0,1}$ and $SU(3)$. The main result is the following:

\begin{thm}\label{thm:2}
  Let $M,N\in \{S^3\times S^5, M_{0,1}, SU(3)\}$ and $M\neq N$, then
  \begin{enumerate}
    \item $D(M)=
    \begin{cases}
      \mathbb{Z}, & \mbox{if } M=S^3\times S^5, M_{0,1} \\
      4\mathbb{Z}\cup\{2k+1|k\in\mathbb{Z}\}, & \mbox{if } M=SU(3).
    \end{cases}$
    \item $D(M,N)=
    \begin{cases}
      4\mathbb{Z}, & \mbox{if } M=SU(3) \\
      2\mathbb{Z}, & \mbox{otherwise}.
    \end{cases}$
  \end{enumerate}
\end{thm}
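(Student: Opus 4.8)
The plan is to combine two ingredients: cohomological constraints that bound $\deg f$ from above, and a cell-by-cell obstruction-theoretic construction of maps realizing the allowed degrees, moved around by the (large) sets of self-maps. Throughout, regard each of the three manifolds $X$ as a CW complex with one cell in each of the dimensions $0,3,5,8$. The Serre spectral sequence of the $S^3$-bundle collapses (its only possible differential is cup product with an Euler class in $H^4(S^5)=0$), so $H^k(X;\mathbb{Z})=\mathbb{Z}$ for $k=0,3,5,8$ and $0$ otherwise, and by Poincar\'e duality $x_3x_5$ generates $H^8$ for chosen generators $x_3\in H^3$, $x_5\in H^5$. The one piece of extra algebra that separates the three is $Sq^2\colon H^3(X;\mathbb{Z}_2)\to H^5(X;\mathbb{Z}_2)$: the $5$-skeleton of $M_{i,j}$ is $\Sigma\mathbb{C}P^2=S^3\cup_\eta e^5$ when $i=1$ and $S^3\vee S^5$ when $i=0$, so $Sq^2x_3=x_5$ holds exactly for $SU(3)$.

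For the upper bounds, write $f^*x_3=ax_3$ and $f^*x_5=bx_5$ for $f\colon M\to N$; then $f^*(x_3x_5)=ab\,x_3x_5$, so $\deg f=\pm ab$. Naturality of $Sq^2$ immediately gives $a\equiv b\pmod{2}$ when $M=N=SU(3)$ (so $\deg f$ is odd or $\equiv 0\pmod{4}$), and "$a$ even or $b$ even", hence $\deg f$ even, whenever exactly one of $M,N$ is $SU(3)$. To upgrade this to the sharp statements --- $4\mathbb{Z}$ in the mixed case, and $2\mathbb{Z}$ rather than $\mathbb{Z}$ for $S^3\times S^5\leftrightarrow M_{0,1}$, where $Sq^2$ is silent --- I would pass to the top cell: making $f$ cellular and restricting to $\bar f\colon M^{(5)}\to N^{(5)}$, a map $f$ realizing a given $(a,b)$ exists iff $\bar f_*\gamma_M=0$ in $\pi_7(N)$, where $\gamma_M\in\pi_7(M^{(5)})$ is the attaching map of the $8$-cell. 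Using Hilton's splitting of $\pi_7(S^3\vee S^5)$ together with the James--Whitehead description of the $\gamma_{i,j}$ (so $\gamma_{0,0}=[\iota_3,\iota_5]$, the class $\gamma_{0,1}$ differs from $\gamma_{0,0}$ by a definite nonzero $2$-torsion element supported on a single wedge summand, and $\gamma_{1,0}$ is the corresponding class in $\pi_7(\Sigma\mathbb{C}P^2)$), one computes $\bar f_*\gamma_M$ and reads off the residual parity constraint on $(a,b)$. Pinning down these attaching maps and the induced maps on $\pi_7$ precisely enough is, I expect, the main obstacle of the whole argument, and it is presumably the place where the computation of Lafont--Neofytidis has to be repaired.

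For realization: the maps $\id\times\varphi_k$ realize every integer on $S^3\times S^5$. On $SU(3)$, since $\pi_4(\Sigma\mathbb{C}P^2)=0$, any $(p,q)$ with $p\equiv q\pmod{2}$ is realized on the $5$-skeleton (extend a degree-$p$ self-map of $S^3$ over $e^5$, then correct the degree on $e^5$ by $\operatorname{im}(\pi_5(\Sigma\mathbb{C}P^2)\to H_5)=2\mathbb{Z}$), and since $\pi_7(SU(3))=0$ the obstruction $\bar u_*\gamma_{1,0}\in\pi_7(SU(3))$ vanishes, so every such $\bar u$ extends to a self-map of $SU(3)$ of degree $pq$; as $(p,q)$ ranges over $p\equiv q\pmod{2}$ this sweeps out exactly $4\mathbb{Z}\cup\{2k+1\mid k\in\mathbb{Z}\}$. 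On $M_{0,1}$, bundle automorphisms covering the degree-$k$ diffeomorphisms of $S^5$ realize all odd degrees (these exist precisely for $k$ odd, as the pulled-back bundle is again $M_{0,1}$), and a degree-$2$ self-map is produced from the bundle structure directly --- e.g.\ using that $M_{0,1}$ is the fibrewise unreduced suspension of an $S^2$-bundle, or by a Pontryagin--Thom collapse onto the Thom space of the normal bundle of a section followed by a degree-$2$ map back --- after which multiplicativity of degrees gives $D(M_{0,1})=\mathbb{Z}$. The mixed sets $D(SU(3),\cdot)$ and $D(S^3\times S^5,M_{0,1})$ then follow by exhibiting a single map of degree $4$ (resp.\ $2$), built by pairing bundle projections with the lowest-degree admissible maps into the sphere factors, and composing with the self-maps above to fill out the progression; the same obstruction check over the top cell as in the upper-bound step confirms these composites have the asserted degrees. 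Finally, each degree set above is symmetric about $0$ because every one of the three manifolds carries an orientation-reversing self-diffeomorphism (reverse the base $S^5$; for $SU(3)$, complex conjugation), and $0\in D(M,N)$ via a constant map.
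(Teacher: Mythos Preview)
Your plan is the paper's approach: pass to the attaching map of the top cell and compute the obstruction in $\pi_7$ case by case. The $Sq^2$ step you add is a pleasant shortcut for some of the parity constraints, but the paper never uses it and goes straight to the Puppe sequences; as you yourself note, $Sq^2$ is silent for $S^3\times S^5\leftrightarrow M_{0,1}$ and gives only $2\mathbb{Z}$ (not $4\mathbb{Z}$) for $D(SU(3),\cdot)$, so it cannot replace the $\pi_7$ calculation.

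That calculation, which you identify as ``the main obstacle'' but do not carry out, is precisely where the content sits. The paper does it explicitly: the attaching classes are $[\iota_3,\iota_5]$, $[\iota_3,\iota_5]+a_3\eta_6$, and $\xi$ with $\Sigma\xi=\nu_4\eta_7$; the key inputs are Hilton's formula for $(2k\iota_4)\nu_4$, the relation $[\iota_4,\iota_4]=2\nu_4\pm a_4$, the vanishing of Whitehead products on $S^3$, and $\pi_7(SU(3))=0$. Until you actually run these, your upper bounds for $D(M_{0,1},S^3\times S^5)$, $D(S^3\times S^5,M_{0,1})$, and the sharp $4\mathbb{Z}$ for $D(SU(3),\cdot)$ are only promised, not proved.

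Your realization strategy also diverges from the paper's. The paper does not build maps bundle-theoretically and then compose with self-maps; it reads the realizing $(k,l)$ directly off the same obstruction formula (e.g.\ for $D(M_{0,1})$ the obstruction is $k(l{+}1)a_3\eta_6$, so $l=1$ and $k$ arbitrary already gives every degree). Your fibrewise-suspension route to a degree-$2$ self-map of $M_{0,1}$ does work once spelled out (pinch-then-fold on the suspension coordinate is $SO(3)$-equivariant), but your list of ``mixed'' cases is incomplete: you omit $D(M_{0,1},S^3\times S^5)$ and both $D(\,\cdot\,,SU(3))$ cases. Finally, for $D(SU(3),M_{0,1})$ the paper uses a reduction you do not mention: because $([\iota_3,\iota_5]+a_3\eta_6)\circ(2k\iota_7)=[\iota_3,\iota_5]\circ(2k\iota_7)$, the even-degree lifting problem into $M_{0,1}$ coincides with that into $S^3\times S^5$, so this case follows from $D(SU(3),S^3\times S^5)$ via Lemma~\ref{thm:1}.
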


\begin{rem}
  The motivation comes from the work of Lafont and Neofytidis \cite{LN17}. They considered mapping degrees among principal $SU(2)$-bundles over $S^5$. Note that $SU(2)\cong S^3$. One aim of their article is to correct P\"utmann's result of $D(SU(3))$ \cite{Put09}. However, their result about $D(SU(3), S^3\times S^5)$ is wrong. The reason is that the image of their map $g$ does not lie in $SU(3)$. For example, choose $A=
  \begin{pmatrix}
    0 & 0 & 1 \\
    1 & 0 & 0 \\
    0 & 1 & 0
  \end{pmatrix}\in SU(3)$, then $g(A)=
  \begin{pmatrix}
    0 & 0 & 0 \\
    0 & 0 & -1 \\
    0 & 0 & 0
  \end{pmatrix}\notin SU(3)$. It also leads to a wrong proof of their result on $D(SU(3))$, which relies on $D(SU(3), S^3\times S^5)$ in their article, although the final result is miraculously correct.
\end{rem}

\begin{rem}
  In fact, any simply connected $8$-manifold with homology isomorphic to that of $S^3\times S^5$ is homotopy equivalent to either $S^3\times S^5$, $M_{0,1}$ or $SU(3)$. It will be proved in a coming paper of the author. Therefore we actually obtained the set of mapping degrees among a larger class of manifolds.
\end{rem}

\section{The proof}

We'll analysis the homotopy sets between these manifolds. First recall some basic results for homotopy groups of spheres. Let $\iota_n\in\pi_n(S^n)$ be represented by the identity map of $S^n$, $\eta_2\in\pi_3(S^2)$ and $\nu_4\in\pi_7(S^4)$ be represented by Hopf maps, $\eta_n=\Sigma^{n-2}\eta_2\in\pi_{n+1}(S^n)$.

\begin{lem}[\cite{Tod52,Tod62}]
  \begin{enumerate}
    \item $\pi_{n+1}(S^n)\cong
    \begin{cases}
      \mathbb{Z}\{\eta_2\}, & \mbox{if } n=2 \\
      \mathbb{Z}_2\{\eta_n\}, & \mbox{if } n\geq 3,
    \end{cases}$
    \item $\pi_{n+2}(S^n)\cong\mathbb{Z}_2\{\eta_n\eta_{n+1}\}$ for $n\geq 2$;
    \item $\pi_6(S^3)\cong \mathbb{Z}_{12}\{a_3\}$, \\
    $\pi_7(S^4)\cong \mathbb{Z}\{\nu_4\}\oplus\mathbb{Z}_{12}\{a_4\}$, $a_4=\Sigma a_3$, $[\iota_4,\iota_4]=2\nu_4\pm a_4$;
    \item $\pi_7(S^3)\cong\mathbb{Z}_2\{a_3\eta_6\}$,\\
    $\pi_8(S^4)\cong \mathbb{Z}_2\{\nu_4\eta_7\}\oplus\mathbb{Z}_2\{a_4\eta_7\}$.
  \end{enumerate}
\end{lem}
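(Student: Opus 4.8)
The plan is to assemble these groups from the bottom up using three standard tools: the long exact sequences of the two Hopf fibrations $S^1\hookrightarrow S^3\to S^2$ and $S^3\hookrightarrow S^7\to S^4$; the Freudenthal suspension theorem, which tells us that $\Sigma\colon \pi_k(S^n)\to\pi_{k+1}(S^{n+1})$ is an isomorphism for $k\le 2n-2$ and an epimorphism for $k=2n-1$, so that each group becomes stable once $n$ is large; and Serre's mod-$\mathcal{C}$ spectral-sequence method for pinning down the orders of the genuinely unstable groups. Throughout, $\eta_n$, $\nu_4$, $a_3$ are fixed generators and all named elements are their suspensions or composites, so the content is to verify the group structure and that these specific elements generate.

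For (1), the case $n=2$ is immediate from the Hopf fibration $S^1\hookrightarrow S^3\to S^2$: since $\pi_3(S^1)=\pi_2(S^1)=0$, the long exact sequence gives $\pi_3(S^2)\cong\pi_3(S^3)=\mathbb{Z}\{\eta_2\}$. For $n\ge 3$ Freudenthal places $\pi_{n+1}(S^n)$ in the stable range, so it suffices to compute $\pi_4(S^3)$: the suspension $\pi_3(S^2)\to\pi_4(S^3)$ is epi with kernel generated by the Whitehead square $[\iota_2,\iota_2]$, and since this square has Hopf invariant $2$ one gets $[\iota_2,\iota_2]=\pm 2\eta_2$, whence $\pi_4(S^3)\cong\mathbb{Z}_2\{\eta_3\}$. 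For (2) the stable value $\pi_2^s=\mathbb{Z}_2$ handles $n\ge 4$; the unstable cases follow from $\pi_4(S^2)\cong\pi_4(S^3)$ (again the $S^1$-fibration) and from surjectivity of $\Sigma\colon\pi_5(S^3)\to\pi_6(S^4)$ together with an order count, the generator being the composite $\eta_n\eta_{n+1}$.

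The crux is (3). The one input that cannot be sidestepped by elementary fibration arguments is $\pi_6(S^3)\cong\mathbb{Z}_{12}$; I would obtain it by running Serre's spectral sequence up the Whitehead tower of $S^3$, computing the $p$-primary parts separately to find a $\mathbb{Z}_4$ at $p=2$ and a $\mathbb{Z}_3$ at $p=3$. Granting this, the long exact sequence of $S^3\hookrightarrow S^7\to S^4$ gives, using $\pi_6(S^7)=0$ and the vanishing of the torsion-to-free map $\pi_7(S^3)\to\pi_7(S^7)=\mathbb{Z}$, a short exact sequence $0\to\mathbb{Z}\{\nu_4\}\to\pi_7(S^4)\to\pi_6(S^3)\to 0$; as the torsion subgroup of a finitely generated abelian group always splits off, this forces $\pi_7(S^4)\cong\mathbb{Z}\{\nu_4\}\oplus\mathbb{Z}_{12}\{a_4\}$ with $a_4=\Sigma a_3$. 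The relation $[\iota_4,\iota_4]=2\nu_4\pm a_4$ then follows from the Hopf-invariant computation $H([\iota_4,\iota_4])=\pm2=H(2\nu_4)$, which fixes the free coordinate, with the EHP sequence (identifying $[\iota_4,\iota_4]$ as the generator of $\ker\Sigma$) pinning the torsion coordinate to $\pm a_4$.

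For (4), $\pi_7(S^3)\cong\mathbb{Z}_2$ is generated by the composite $a_3\eta_6$, which one checks is nonzero and exhausts the group via Toda's composition methods (or the corresponding order computation), and $\pi_8(S^4)$ is read off from the same Hopf-fibration sequence one degree higher, its two $\mathbb{Z}_2$ summands being carried by the composites $\nu_4\eta_7$ and $a_4\eta_7$. The main obstacle is squarely the unstable computation $\pi_6(S^3)=\mathbb{Z}_{12}$ and the attendant bookkeeping of signs and composites; once the orders and the two Hopf long exact sequences are in hand, the remaining identifications are essentially mechanical. This is, of course, exactly the program Toda carries out systematically, which is why the statement is cited to \cite{Tod52,Tod62}.
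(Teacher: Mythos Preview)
The paper does not prove this lemma; it is quoted from Toda \cite{Tod52,Tod62} without argument, so there is no proof in the paper to compare yours against. Your outline is a reasonable summary of how these groups are classically obtained (Hopf fibrations, Freudenthal, Serre's mod-$\mathcal{C}$ theory, EHP), and you correctly identify $\pi_6(S^3)\cong\mathbb{Z}_{12}$ as the one genuinely hard input.

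One point to tighten: in (3) you conclude $\pi_7(S^4)\cong\mathbb{Z}\{\nu_4\}\oplus\mathbb{Z}_{12}\{a_4\}$ from ``the torsion subgroup of a finitely generated abelian group always splits off.'' That structure theorem yields the abstract isomorphism type $\mathbb{Z}\oplus\mathbb{Z}_{12}$, but an extension $0\to\mathbb{Z}\to G\to\mathbb{Z}_{12}\to 0$ need not split (e.g.\ $0\to\mathbb{Z}\xrightarrow{\times 12}\mathbb{Z}\to\mathbb{Z}_{12}\to 0$), so it does not by itself show that $\nu_4$ generates a free direct summand or that $\Sigma a_3$ generates the torsion part. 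The clean fix is to use the Hopf invariant: $H(\nu_4)=1$ splits off $\mathbb{Z}\{\nu_4\}$ directly (equivalently, invoke the James splitting $\Omega S^4\simeq S^3\times\Omega S^7$), and the complementary summand is then the image of $\Sigma$, generated by $a_4=\Sigma a_3$. The remaining items in your sketch are correct in outline, with the expected hand-waving at $\pi_5(S^3)$ and $\pi_7(S^3)$ that any honest treatment defers to Toda.
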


Next write down the cell structures.

\begin{lem}[cf. \cite{JW54,Muk82}]
  \begin{enumerate}
    \item $S^3\times S^5\simeq (S^3\vee S^5)\cup_{[\iota_3,\iota_5]} D^8$;
    \item $M_{0,1}\simeq (S^3\vee S^5)\cup_{[\iota_3,\iota_5]+a_3\eta_6} D^8$;
    \item $SU(3)\simeq S^3\cup_{\eta_3}D^5\cup_{\xi}D^8$, where $\xi$ generates $\pi_7(S^3\cup_{\eta_3}D^5)\cong\mathbb{Z}$.
  \end{enumerate}
\end{lem}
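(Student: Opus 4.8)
The plan is to realize all three spaces uniformly as total spaces of the $S^3$-bundles $M_{i,j}$ over $S^5$ and to read off the cell structures from the clutching construction, then specialize. Writing $S^5=D^5_+\cup_{S^4}D^5_-$ and trivializing over each hemisphere, the total space is $E_{i,j}=(D^5_+\times S^3)\cup_c(D^5_-\times S^3)$, glued over the equator by a clutching function $c\colon S^4\to SO(4)$ representing $(i,j)\in\pi_4(SO(4))\cong\pi_4(S^3)\oplus\pi_4(SO(3))$. Since $D^5_-\times S^3$ retracts onto the basepoint fibre $S^3$, the space $E_{i,j}$ is obtained from $S^3$ by attaching one $5$-cell and one $8$-cell, so $E_{i,j}\simeq S^3\cup_f e^5\cup_g e^8$ with $f\in\pi_4(S^3)$ and $g\in\pi_7(\text{5-skeleton})$ to be identified.

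First I would pin down $f$. Its attaching map is the composite of $c$ with evaluation $SO(4)\to S^3$ at the basepoint of the fibre, i.e. the image of $(i,j)$ under the map $\pi_4(SO(4))\to\pi_4(S^3)$ induced by the projection in the fibration $SO(3)\to SO(4)\to S^3$. Under the splitting this projection is the first factor, so $f=i\,\eta_3$. Hence the $5$-skeleton is $S^3\vee S^5$ when $i=0$ (the cases $S^3\times S^5=M_{0,0}$ and $M_{0,1}$) and $S^3\cup_{\eta_3}D^5$ when $i=1$ (the case $SU(3)=M_{1,0}$, using $M_{1,1}\simeq SU(3)$). This separates (1),(2) from (3). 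I note the consistency check that $i$ is precisely the primary obstruction in $H^5(S^5;\pi_4(S^3))\cong\mathbb{Z}_2$ to a section, so $i=0$ bundles admit sections.

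For (1),(2) I would compute $g\in\pi_7(S^3\vee S^5)$. By the Hilton--Milnor theorem, in the relevant range this group is $\mathbb{Z}_2\{a_3\eta_6\}\oplus\mathbb{Z}_2\{\eta_5\eta_6\}\oplus\mathbb{Z}\{[\iota_3,\iota_5]\}$, so $g$ has three components. The $[\iota_3,\iota_5]$-component is forced to be $\pm1$: each $E_{0,j}$ is a closed oriented $8$-manifold, so Poincar\'e duality gives $x_3\cup x_5=\pm x_8$, and for a complex $(S^3\vee S^5)\cup_g e^8$ this cup product coefficient equals the coefficient of $[\iota_3,\iota_5]$ in $g$. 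For the trivial bundle this recovers the classical product cell structure $g=[\iota_3,\iota_5]$, proving (1). To obtain (2) I would compare $M_{0,1}$ with $M_{0,0}$: both carry a section over $S^5$, so their attaching maps differ only by the correction recording how the $SO(3)$-index $j$ twists the fibre over the top cell. This difference is the image of $j$ under a $J$-homomorphism $\pi_4(SO(3))\to\pi_7(S^3)$ into the fibre summand, which I would identify with the nonzero element $a_3\eta_6$, giving $g=[\iota_3,\iota_5]+a_3\eta_6$. The main obstacle lives here: proving that the correction lands in the fibre summand $\pi_7(S^3)$ (so that the $\eta_5\eta_6$-component vanishes) and equals exactly $a_3\eta_6$ requires the James--Whitehead analysis of the fibre-twisting over the top cell, i.e. the nontriviality of this $J$-homomorphism on the generator of $\pi_4(SO(3))$. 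This is the real content of the lemma; the $5$-skeleton and the Poincar\'e-duality input for the leading Whitehead term are comparatively routine.

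For (3) I must treat the $5$-skeleton $S^3\cup_{\eta_3}D^5\simeq\Sigma\mathbb{CP}^2$. I would first establish $\pi_7(\Sigma\mathbb{CP}^2)\cong\mathbb{Z}$: rationally this group is $\mathbb{Q}$, generated by the analogue of the Whitehead product, and freeness from torsion follows from the homotopy exact sequence of the pair $(\Sigma\mathbb{CP}^2,S^3)$ with Blakers--Massey (or directly from Mukai's computation \cite{Muk82}). Since $SU(3)$ is a closed oriented $8$-manifold with $x_3\cup x_5=\pm x_8$, the same cup-product argument forces its top cell to attach by a generator $\xi$ of this $\mathbb{Z}$, yielding (3).
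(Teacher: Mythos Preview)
The paper does not prove this lemma; it is simply quoted from James--Whitehead \cite{JW54} and Mukai \cite{Muk82}. There is therefore nothing in the paper to compare against, and your outline is essentially a sketch of the James--Whitehead argument itself. As such it is correct in structure.

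Two remarks that tighten the argument. First, the vanishing of the $\eta_5\eta_6$-component in (2) is easier than you suggest and does not require the James--Whitehead fibre-twisting analysis: the bundle projection $p:M_{0,j}\to S^5$ restricts on the $5$-skeleton to the collapse $S^3\vee S^5\to S^5$, and since $p$ is globally defined it extends over the $8$-cell, so the image of the attaching map in $\pi_7(S^5)$ is zero. Under that collapse $[\iota_3,\iota_5]\mapsto 0$ and $a_3\eta_6\mapsto 0$, so this image is exactly the $\eta_5\eta_6$-coefficient. Second, once that is in hand, the ``main obstacle'' you flag can be bypassed using the paper's own Lemma~1.1 rather than a direct $J$-computation: both attaching maps now have the form $\pm[\iota_3,\iota_5]+\epsilon\,a_3\eta_6$ with $\epsilon\in\{0,1\}$; the sign is normalised by the self-equivalence $(-\iota_3)\vee\iota_5$ of $S^3\vee S^5$ (which fixes the $2$-torsion class $a_3\eta_6$); the product cell structure gives $\epsilon=0$ for $M_{0,0}$; and since $M_{0,1}\not\simeq S^3\times S^5$ by Lemma~1.1 one must have $\epsilon=1$ for $M_{0,1}$.

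For (3) your Poincar\'e-duality argument is valid as stated: writing $Y=S^3\cup_{\eta_3}D^5$, the map $Y\cup_{n\xi_0}D^8\to Y\cup_{\xi_0}D^8$ which is the identity on $Y$ and degree $n$ on the top cell shows that the cup-product coefficient $x_3x_5$ is linear in $n$, so $x_3x_5=\pm x_8$ in $SU(3)$ forces the attaching class to be a generator.
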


Note that in this article, obvious inclusion maps are always omitted.

In the following, for $M\in\{S^3\times S^5, M_{0,1}, SU(3)\}$, choose $e_3^M$ and $e_5^M$ to be generators of $H^3(M)\cong\mathbb{Z}$ and $H^5(M)\cong\mathbb{Z}$, such that $e_3^Me_5^M$ equals to the orientation cohomology class $\omega_M$ of $M$.

\begin{proof}[Proof of theorem \ref{thm:2}]
  \begin{enumerate}[(1),wide]
  \item $D(S^3\times S^5)=\mathbb{Z}$ is obvious.
  \item $D(S^3\times S^5,M_{0,1})=2\mathbb{Z}$.

  Consider the Puppe sequence induced by the cofiber sequence $S^7\xrightarrow{[\iota_3,\iota_5]} S^3\vee S^5\to S^3\times S^5$:
  \[
  [S^3\times S^5,M_{0,1}]\to [S^3\vee S^5,M_{0,1}]\xrightarrow{[\iota_3,\iota_5]^*} [S^7,M_{0,1}]
  \]
  $[S^3\vee S^5,M_{0,1}]\cong [S^3,M_{0,1}]\times [S^5,M_{0,1}]\cong \mathbb{Z}\{\iota_3\}\oplus\mathbb{Z}_2\{\eta_3\eta_4\}\oplus\mathbb{Z}\{\iota_5\}$. An element can be lifted to $[S^3\times S^5,M_{0,1}]$ if and only if it maps to $0$ under $[\iota_3,\iota_5]^*$. We have
  \[
  \begin{split}
    [\iota_3,\iota_5]^*(k\iota_3+\epsilon\eta_3\eta_4+l\iota_5) & =(k\iota_3+\epsilon\eta_3\eta_4+l\iota_5)[\iota_3,\iota_5] \\
      & =[k\iota_3,\epsilon\eta_3\eta_4+l\iota_5] \\
      & =kl[\iota_3,\iota_5]+k\epsilon[\iota_3,\eta_3\eta_4] \\
      & =kla_3\eta_6
  \end{split}
  \]
  In the last step, $[\iota_3,\iota_5]=a_3\eta_6$ in $M_{0,1}$ and $[\iota_3,\eta_3\eta_4]=0$ as $S^3$ is a Lie group \cite[Corollary X.7.8]{Whi78}. Since $a_3\eta_6$ is of order $2$, $[\iota_3,\iota_5]^*(k\iota_3+\epsilon\eta_3\eta_4+l\iota_5)=0$ if and only if $kl$ is even. Notice that $kl$ is just the degree of the lifted map. Therefore the set of degrees coincides with all even numbers.
  \item\label{item:1} $D(S^3\times S^5,SU(3))=2\mathbb{Z}$.

  Consider the Puppe sequence induced by the cofiber sequence $S^7\xrightarrow{[\iota_3,\iota_5]} S^3\vee S^5\to S^3\times S^5$:
  \[
  [S^3\times S^5,SU(3)]\to [S^3\vee S^5,SU(3)]\xrightarrow{[\iota_3,\iota_5]^*} [S^7,SU(3)]
  \]
  By the cell structure of $SU(3)$, $\pi_7(SU(3))=0$. Therefore, all elements in $[S^3\vee S^5,SU(3)]$ can be lifted to $[S^3\times S^5,SU(3)]$. $[S^3\vee S^5,SU(3)]\cong [S^3,SU(3)]\times [S^5,SU(3)]\cong \mathbb{Z}\{\alpha\}\oplus\mathbb{Z}\{\beta\}$. Suppose $(k\alpha,l\beta)$ lifts to a map $f_{k,l}:S^3\times S^5\to SU(3)$. Then we have
  \[
  f_{k,l}^*\omega_{SU(3)}=f_{k,l}^*(e_3^{SU(3)}e_5^{SU(3)})= f_{k,l}^*e_3^{SU(3)}f_{k,l}^*e_5^{SU(3)}
  \]
  It is clear that the value of $f_{k,l}^*e_3^{SU(3)}$ and $f_{k,l}^*e_5^{SU(3)}$ correspond to the image of $k\alpha$ and $l\beta$ under the Hurwicz maps.
  By Hurewicz theorem, $h_3: \pi_3(SU(3))\to H_3(SU(3))$ is an isomorphism. To compute $h_5:\pi_5(SU(3))\to H_5(SU(3))$, use the fiber bundle $S^3\to SU(3)\xrightarrow{p} S^5$:
  \[
  \begin{tikzcd}
  \pi_5(SU(3)) \arrow[r,"p_*"] \arrow[d,"h_5"] & \pi_5(S^5) \arrow[r] \arrow[d,"h_5","\cong"'] & \pi_4(S^3) \arrow[r] & \pi_4(SU(3))=0 \\
  H_5(SU(3)) \arrow[r,"p_*","\cong"'] & H_5(S^5) & &
  \end{tikzcd}
  \]
  The bottom isomorphism follows easily by the Gysin sequence. Therefore $h_5:\pi_5(SU(3))\to H_5(SU(3))$ is a multiplication by $\pm2$ as $\pi_4(S^3)\cong \mathbb{Z}_2$. With those generators suitably chosen, we may assume the sign is $+$. Hence
  \[
  f_{k,l}^*\omega_{SU(3)} =f_{k,l}^*e_3^{SU(3)}f_{k,l}^*e_5^{SU(3)} =2kle_3^{S^3\times S^5}e_5^{S^3\times S^5}=2kl\omega_{S^3\times S^5}
  \]
  which means that the set of degrees is just all even numbers.
  \item $D(M_{0,1},S^3\times S^5)=2\mathbb{Z}$.

  Consider the cofiber sequence $S^7\xrightarrow{[\iota_3,\iota_5]+a_3\eta_6} S^3\vee S^5\to M_{0,1}$. We have an exact sequence
  \[
  [M_{0,1},S^3\times S^5]\to [S^3\vee S^5,S^3\times S^5]\xrightarrow{([\iota_3,\iota_5]+a_3\eta_6)^*} [S^7,S^3\times S^5]
  \]
  $[S^3\vee S^5,S^3\times S^5]\cong [S^3,S^3\times S^5]\times [S^5,S^3\times S^5]\cong \mathbb{Z}\{\iota_3\}\oplus\mathbb{Z}_2\{\eta_3\eta_4\}\oplus\mathbb{Z}\{\iota_5\}$.
  \[
  \begin{split}
    ([\iota_3,\iota_5]+a_3\eta_6)^*(k\iota_3+\epsilon\eta_3\eta_4+l\iota_5) & =(k\iota_3+\epsilon\eta_3\eta_4+l\iota_5)([\iota_3,\iota_5]+a_3\eta_6) \\
      & =[k\iota_3,\epsilon\eta_3\eta_4+l\iota_5]+ka_3\eta_6 \\
      & =kl[\iota_3,\iota_5]+k\epsilon[\iota_3,\eta_3\eta_4]+ka_3\eta_6 \\
      & =ka_3\eta_6
  \end{split}
  \]
  In the last step, note that $[\iota_3,\iota_5]=0$ in $S^3\times S^5$. Since $ka_3\eta_6=0$ if and only if $k$ is even, the possible degrees $kl$ are chosen from all even numbers.
  \item $D(M_{0,1},M_{0,1})=\mathbb{Z}$.

  Consider the cofiber sequence $S^7\xrightarrow{[\iota_3,\iota_5]+a_3\eta_6} S^3\vee S^5\to M_{0,1}$. The we have an exact sequence:
  \[
  [M_{0,1},M_{0,1}]\to [S^3\vee S^5,M_{0,1}] \xrightarrow{([\iota_3,\iota_5]+a_3\eta_6)^*} [S^7,M_{0,1}]
  \]
  \[
  \begin{split}
    ([\iota_3,\iota_5]+a_3\eta_6)^*(k\iota_3+\epsilon\eta_3\eta_4+l\iota_5) & =(k\iota_3+\epsilon\eta_3\eta_4+l\iota_5)([\iota_3,\iota_5]+a_3\eta_6) \\
      & =[k\iota_3,\epsilon\eta_3\eta_4+l\iota_5]+ka_3\eta_6 \\
      & =k(l+1)a_3\eta_6
  \end{split}
  \]
  which equals to $0$ if and only if $k(l+1)$ is even. Thus we can choose $l=1$ and $k$ arbitrary to make the mapping degree $kl$ realize all integers.
  \item $D(M_{0,1},SU(3))=2\mathbb{Z}$.

  The proof is the same as \ref{item:1} except that $S^3\times S^5$ is changed to $M_{0,1}$.
  \item\label{item:2} $D(SU(3),S^3\times S^5)=4\mathbb{Z}$.

  We have $[SU(3),S^3\times S^5]=[SU(3),S^3]\times [SU(3),S^5]$. For $[SU(3),S^3]$, first consider the Puppe sequence induced by the cofiber sequence $S^4\xrightarrow{\eta_3} S^3 \xrightarrow{i} S^3\cup_{\eta_3} D^5$:
  \[
  [S^4,S^3]\xrightarrow{\eta_4^*} [S^5,S^3] \to [S^3\cup_{\eta_3} D^5, S^3] \xrightarrow{i^*} [S^3,S^3] \xrightarrow{\eta_3^*} [S^4,S^3]
  \]
  It is clear that $\eta_4^*$ is an isomorphism and $\eta_3^*$ is surjective. Therefore $[S^3\cup_{\eta_3}D^5, S^3] \xrightarrow[\cong]{i^*} 2[S^3,S^3]$. Suppose $f_k\in[S^3\cup_{\eta_3} D^5, S^3]$ maps to $2k\iota_3$ under $i^*$. Then consider the Puppe sequence induced by the cofiber sequence $S^7\xrightarrow{\xi} S^3\cup_{\eta_3} D^5 \to SU(3)$:
  \[
  [SU(3),S^3] \to [S^3\cup_{\eta_3} D^5, S^3] \xrightarrow{\xi^*} [S^7,S^3]
  \]
  We want to know that for which $k$, $\xi^*f_k=f_k\xi=0$. Notice that the suspension map $\Sigma:\pi_7(S^3)\to \pi_8(S^4)$ is injective. Hence $f_k\xi=0$ is equivalent to $\Sigma(f_k\xi)=0$.
  \[
  \begin{split}
    \Sigma(f_k\xi) & =\Sigma f_k\Sigma\xi \\
      & =(\Sigma f_k)\nu_4\eta_7 \quad \text{($\Sigma\xi=\nu_4\eta_7$ \cite{Muk82})}\\
      & =\Sigma (f_ki)\nu_4\eta_7 \\
      & =(2k\iota_4)\nu_4\eta_7 \\
      & =(2k\nu_4+k(2k-1)[\iota_4,\iota_4])\eta_7 \quad \text{by Hilton's formula \cite{Hil55}} \\
      & =(2k\nu_4+k(2k-1)(2\nu_4\pm a_4))\eta_7 \\
      & =ka_4\eta_7
  \end{split}
  \]
  Therefore, $\Sigma(f_k\xi)=0$ if and only if $k$ is even, which implies that the possible mapping degrees are all divisible by $4$. Let $p$ be the bundle projection $SU(3)\to S^5$, then $p_*:H_5(SU(3))\to H_5(S^5)$ is an isomorphism. Therefore, the degree of $f_k\times p:SU(3)\to S^3\times S^5$ is $4k$, and our result follows.
  \item $D(SU(3),M_{0,1})=4\mathbb{Z}$.

  It's not as straight as the other cases. We'll use the following observation.

  \begin{lem}[\cite{Tan06}]\label{thm:1}
    Let $M,N$ be two closed $n$-manifolds and $\overline{M}, \overline{N}$ be obtained by deleting an embedded $D^n$ in $M,N$ respectively. Then $k\in D(M,N)$ if and only if there exists $\overline{f}:\overline{M}\to \overline{N}$ such that the following diagram commutes up to homotopy:
    \[
    \begin{tikzcd}
    S^{n-1} \arrow[r,"i_M"] \arrow[d,"k\iota_{n-1}"] & \overline{M} \arrow[d,"\overline{f}"] \\
    S^{n-1} \arrow[r,"i_N"] & \overline{N}
    \end{tikzcd}
    \]
    Here $i_M, i_N$ denotes the inclusion of $S^{n-1}$ to the boundary of $\overline{M},\overline{N}$ respectively.
  \end{lem}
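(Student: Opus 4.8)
Throughout fix orientations (so that $\deg$ is defined), and recall the disk theorem of Palais and Cerf: an orientation-preserving embedding $D^n\hookrightarrow M$ is unique up to ambient isotopy. Hence $\overline M$ and the free homotopy class of $i_M$ depend only on $M$, one recovers $M$ as the mapping cone $\overline M\cup_{i_M}D^n$, and $M/\overline M\cong S^n$ (similarly for $N$). I would prove the two implications separately, the ``only if'' direction being the substantial one.

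\emph{The ``if'' direction.} Suppose $\overline f\colon\overline M\to\overline N$ satisfies $\overline f\circ i_M\simeq i_N\circ(k\iota_{n-1})$. Choosing a collar of $\partial\overline M=S^{n-1}$ in $M$ together with a homotopy inside $\overline N$ witnessing this relation, I would glue $\overline f$ to a radial extension over $D^n$ of a degree-$k$ self-map of $S^{n-1}$ (a map of pairs $(D^n,S^{n-1})\to(D^n,S^{n-1})$ of relative degree $k$), using the collar to bridge the two maps along $S^{n-1}$. This yields $f\colon M\to N$ with $f|_{\overline M}$ homotopic to $\overline f$ and image in $\overline N$, hence a map $M/\overline M\to N/\overline N$ which, after the identifications with $S^n$, is the suspension of that self-map and so has degree $k$. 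Comparing the long exact homology sequences of the pairs $(M,\overline M)$ and $(N,\overline N)$—using that $[M]$ generates $H_n(M,\overline M)\cong H_n(M/\overline M)\cong\mathbb Z$ and that $H_n(N)\to H_n(N,\overline N)$ is an isomorphism sending $[N]$ to a generator—then forces $f_*[M]=k[N]$, i.e. $k\in D(M,N)$.

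\emph{The ``only if'' direction.} Let $f\colon M\to N$ have degree $k$; after a homotopy take $f$ smooth. Choose a regular value $q$ and a small closed ball $D^n_N\ni q$ of regular values, so small that $f^{-1}(D^n_N)$ is a disjoint union of closed balls, one around each of the finitely many points of $f^{-1}(q)$, each mapped diffeomorphically onto $D^n_N$. Joining these balls by thin tubes running along embedded arcs in $M$ (chosen disjoint and, away from their endpoints, in the complement of the balls, so that $f$ carries the tubes into $N\setminus D^n_N$), I obtain an embedded disk $D^n_M\subset M$ with $f\big(M\setminus\operatorname{int}D^n_M\big)\subset\overline N:=N\setminus\operatorname{int}D^n_N$ and with $f|_{D^n_M}\colon(D^n_M,\partial D^n_M)\to(N,\overline N)$ of relative degree equal to the signed count of $f^{-1}(q)$, namely $k$. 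Using the disk theorem I isotope so that $D^n_M,D^n_N$ are the prescribed embedded disks, and set $\overline f:=f|_{\overline M}\colon\overline M\to\overline N$. Since $\overline f\circ i_M=f|_{\partial D^n_M}$ represents $\partial[f|_{D^n_M}]$ under the connecting map $\partial\colon\pi_n(N,\overline N)\to\pi_{n-1}(\overline N)$, and $N$ is obtained from $\overline N$ by attaching a single $n$-cell with characteristic map $\Phi_N$ satisfying $\partial[\Phi_N]=[i_N]$, the relative Hurewicz theorem (up to the $\pi_1(\overline N)$-action, which is trivial in the cases at hand) identifies the relative class with $k[\Phi_N]$, so that $\overline f\circ i_M\simeq i_N\circ(k\iota_{n-1})$ and the square commutes up to homotopy.

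\emph{Main obstacle.} The delicate step is the geometric one in the second half: bringing $f$, up to homotopy, into ``standard form near a disk'', and then pinning the relative class of $f|_{D^n_M}$ to exactly $k$ times the generator of $\pi_n(N,\overline N)$—in general one must track the $\pi_1(\overline N)$-action on that group, possibly first cancelling oppositely-signed preimage points along arcs to kill the ambiguity. For the manifolds in this paper all relevant spaces are simply connected, so this bookkeeping disappears; in full generality it is what is handled carefully in \cite{Tan06}.
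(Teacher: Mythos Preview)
The paper does not prove this lemma at all; it is quoted verbatim from \cite{Tan06} and used as a black box. So there is no in-paper argument to compare against, and your sketch must stand on its own.

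On its own it is essentially correct in the simply connected setting, which is all the paper ever uses. The ``if'' direction is routine: gluing $\overline f$ to a radial degree-$k$ map across a collar yields $f$ with $f(\overline M)\subset\overline N$, and the long exact sequences of the pairs $(M,\overline M)$, $(N,\overline N)$ (using excision to identify $H_n(M,\overline M)\cong H_n(D^n,S^{n-1})\cong\mathbb Z$) force $\deg f=k$. In the ``only if'' direction your regular-value-and-tubes construction is the standard one; note that the tubes automatically map into $\overline N$ because by regularity $f^{-1}(\operatorname{int}D^n_N)$ is exactly the union of the small preimage balls, so anything in their complement lands in $\overline N$.

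You have also correctly isolated the one genuine obstacle in the general (non--simply-connected) statement. After your construction one only knows that the relative class $[f|_{D^n_M}]\in\pi_n(N,\overline N)$ hits $k$ under the Hurewicz map, and for $n\ge 3$ this group is the free $\mathbb Z[\pi_1(\overline N)]$-module on $[\Phi_N]$; identifying the class with $k[\Phi_N]$ (and hence its boundary with $k[i_N]$) really does require either trivial $\pi_1$-action or a preliminary cancellation of preimage pairs along arcs, exactly as you say. Since every manifold appearing in this paper is simply connected, your argument suffices for the application; for the full lemma one defers to \cite{Tan06}.
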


  Consider the Puppe sequence induced by the cofiber sequence $S^4\xrightarrow{\eta_3} S^3 \xrightarrow{i} S^3\cup_{\eta_3} D^5$:
  \[
  [S^3\cup_{\eta_3} D^5, M_{0,1}] \xrightarrow{i^*} [S^3,M_{0,1}] \xrightarrow{\eta_3^*} [S^4,M_{0,1}]
  \]
  It is clear that $\eta_3:\pi_3(M_{0,1})\cong\mathbb{Z}\{\iota_3\}\to \pi_4(M_{0,1})\cong \mathbb{Z}_2\{\eta_3\}$ is surjective. Therefore, for any map $f:S^3\cup_{\eta_3} D^5 \to M_{0,1}$, if $f_*:H_3(S^3\cup_{\eta_3} D^5)\cong \mathbb{Z} \to H_3(M_{0,1})\cong \mathbb{Z}$ is a multiplication by $k$, then $k$ must be divisible by $2$. It indicates that all possible mapping degrees from $SU(3)$ to $M_{0,1}$ must be divisible by $2$.

  Now observe that the following two diagrams are equivalent, as $([\iota_3,\iota_5]+a_3\eta_6)(2k\iota_7)=[\iota_3,\iota_5](2k\iota_7)$:
  \[
  \begin{tikzcd}
  S^7 \arrow[r,"\xi"] \arrow[d,"2k\iota_7"] & S^3\cup_{\eta_3} D^5 \arrow[d,"\overline{f}"] \\
  S^7 \arrow[r,"{[\iota_3,\iota_5]}"] & S^3\vee S^5
  \end{tikzcd}
  \qquad
  \begin{tikzcd}[column sep=large]
  S^7 \arrow[r,"\xi"] \arrow[d,"2k\iota_7"] & S^3\cup_{\eta_3} D^5 \arrow[d,"\overline{f}"] \\
  S^7 \arrow[r,"{[\iota_3,\iota_5]}+a_3\eta_6"] & S^3\vee S^5
  \end{tikzcd}
  \]
  Therefore, combining lemma \ref{thm:1} and \ref{item:2}, the result follows.
  \item $D(SU(3))=4\mathbb{Z}\cup\{2k+1|k\in\mathbb{Z}\}$.

  As seen in \ref{item:1}, all maps $S^3\cup_{\eta_3} D^5\to SU(3)$ extends to $SU(3)\to SU(3)$ since $\pi_7(SU(3))=0$. Consider the Puppe sequence induced by the cofiber sequence $S^4\xrightarrow{\eta_3} S^3 \xrightarrow{i} S^3\cup_{\eta_3} D^5$:
  \begin{multline*}
   0=[S^4,SU(3)]\to [S^5,SU(3)] \xrightarrow{q^*} \\
   [S^3\cup_{\eta_3} D^5, SU(3)] \xrightarrow{i^*} [S^3,SU(3)] \to [S^4,SU(3)]=0
  \end{multline*}
  Here $q:S^3\cup_{\eta_3}D^5\to S^5$ is the quotient map. Since $\pi_3(SU(3))\cong \pi_5(SU(3))\cong\mathbb{Z}$ and $[S^3\cup_{\eta_3} D^5, SU(3)]$ is abelian, we have $[S^3\cup_{\eta_3} D^5, SU(3)] \cong\mathbb{Z}\{\alpha\}\oplus\mathbb{Z}\{\beta\}$, where $\alpha$ is the inclusion of $5$-skeleton of $SU(3)$ and $\beta$ is the generator of $\pi_5(SU(3))$ composed with $q$. Obviously,
  \begin{gather*}
    \alpha^*:H^3(SU(3))\xrightarrow{\cong} H^3(S^3\cup_{\eta_3}D^5), \quad H^5(SU(3))\xrightarrow{\cong} H^5(S^3\cup_{\eta_3}D^5) \\
    \beta^*:H^3(SU(3))\xrightarrow{0} H^3(S^3\cup_{\eta_3}D^5), \quad H^5(SU(3))\xrightarrow{\times 2} H^5(S^3\cup_{\eta_3}D^5)
  \end{gather*}
  Therefore,
  \begin{gather*}
    (k\alpha+l\beta)^*:H^3(SU(3))\xrightarrow{\times k} H^3(S^3\cup_{\eta_3}D^5) \\
    (k\alpha+l\beta)^*: H^5(SU(3))\xrightarrow{\times (k+2l)} H^5(S^3\cup_{\eta_3}D^5)
  \end{gather*}
  Extend $k\alpha+l\beta$ to $f_{k,l}:SU(3)\to SU(3)$, then $\deg f_{k,l}=k(k+2l)$. We can realize all odd numbers by choosing $k=1$ and $l$ arbitrary. Notice that if $\deg f_{k,l}$ is even, $k$ must be even. Let $k=2k'$, then $\deg f_{k,l}=4k'(k'+l)$, which is divisible by $4$. Hence we can choose $k'$ arbitrary and $l=1-k'$ to realize $4\mathbb{Z}$.
\end{enumerate}
\end{proof}

\subsection*{Acknowledgment}
The author would like to thank Prof. Haibao Duan for recommending this topic.

\bibliography{bib.bib}

\end{document}